\documentclass{amsart}
\usepackage{amsmath}
\usepackage{amsfonts}
\usepackage{amssymb}
\usepackage[utf8]{inputenc}
\usepackage{amssymb}
\usepackage{mathrsfs}
\usepackage{amsthm}
\usepackage{xcolor}

\usepackage{enumerate}

\usepackage{graphicx}

\newtheorem{thrm}{Theorem}[section]

\newtheorem{lemma}[thrm]{Lemma}
\newtheorem{prop}[thrm]{Proposition}

\newtheorem{question}[thrm]{Question}
\theoremstyle{definition}
\newtheorem{defin}[thrm]{Definition}

\newtheorem{claim}{Claim}

\DeclareMathOperator{\ex}{exp}

\newcommand{\FF}{\mathscr{F}}
\newcommand{\EE}{\mathscr{E}}
\newcommand{\AAA}{\mathscr{A}}
\newcommand{\BB}{\mathscr{B}}
\newcommand{\UU}{\mathscr{U}}
\newcommand{\VV}{\mathscr{V}}

\newcommand{\WW}{\mathscr{W}}

\newcommand{\KK}{\mathcal{K}}

\begin{document}

\title{Lindel\"of scattered $W$-spaces are $\sigma$-compact}

\author[M. Krupski]{Miko\l aj Krupski}
%\date{\today}
\address{
Institute of Mathematics\\ University of Warsaw\\ ul. Banacha 2\\
02--097 Warszawa, Poland }
\email{mkrupski@mimuw.edu.pl}

\begin{abstract}
    We show that every Lindel\"of scattered $W$-space is $\sigma$-compact. This result generalizes a theorem proved recently by Avil\'es and the author in [Topology Appl. 363 (2025), Paper No. 109234] and answers a question posed by Tkachuk. 
\end{abstract}

\maketitle

\section{Introduction}

All spaces under consideration are assumed to be regular.
In this paper, we study the problem, considered in \cite{Tk1} and subsequently in \cite{AK}, of when a Lindel\"of scattered space $X$ is $\sigma$-compact. The main theorem of \cite{AK}, which improves an earlier result established by Tkachuk in \cite{Tk1}, reads as follows:
\begin{thrm}\cite[Theorem 1.2]{AK}\label{thm Aviles, K.}
If $X$ is a Lindel\"of scattered subspace of a $\Sigma$-product $\Sigma(\prod_{\gamma\in \Gamma}Y_\gamma, a)$, of first-countable spaces $Y_\gamma$, then $X$ is $\sigma$-compact.
\end{thrm}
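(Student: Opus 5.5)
The plan is to argue by transfinite induction on the scattered height, using Lindelöfness to reduce each stage to countably many pieces. The key structural fact about $\Sigma$-products of first-countable spaces is this. For a point $x\in\Sigma=\Sigma(\prod_{\gamma}Y_\gamma,a)$, let $\operatorname{supp}(x)=\{\gamma: x(\gamma)\neq a(\gamma)\}$; this set is countable. Fix a countable base $\{U^\gamma_n(y)\}_n$ at each $y\in Y_\gamma$. Each $x\in X$ then has a countable family of basic neighbourhoods
\[
W_n(x)=\{z\in X: z(\gamma_i)\in U^{\gamma_i}_n(x(\gamma_i)),\ i\le n\},
\]
where $\{\gamma_i\}$ enumerates $\operatorname{supp}(x)$ (or a countable set containing it). The intersection $\bigcap_n W_n(x)$ is the set $\{z: z\restriction \operatorname{supp}(x)=x\restriction\operatorname{supp}(x)\}$. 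This is not $\{x\}$, so points need not be $G_\delta$. However, the same closed-in-$X$ set $F_x=\{z\in X: z\restriction S=x\restriction S\}$ (for a countable $S$) is the place where the $\Sigma$-structure is used: the coordinates outside $S$ can be handled by a closing-off argument.

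The induction runs as follows. Let $X^{(\alpha)}$ be the Cantor--Bendixson derivatives. I will prove that every Lindelöf scattered $X\subseteq\Sigma$ is $\sigma$-compact by showing that each $X\setminus X^{(\alpha)}$, intersected appropriately, is covered by countably many compact sets. Lindelöfness then lets us treat only countably many isolated points at each level.

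The first concrete step is an elementary-submodel or closing-off argument. Take a countable $S\subseteq\Gamma$ closed under the assignment $x\mapsto\operatorname{supp}(x)$ for countably many chosen points. The points are chosen so that the projection $\pi_S\colon X\to\prod_{\gamma\in S}Y_\gamma$ is such that the image $\pi_S(X)$ is a Lindelöf scattered subspace of a countable product of first-countable spaces. That image is therefore first countable. A Lindelöf scattered first-countable regular space is countable, hence $\sigma$-compact. I will then try to show that the fibres $\pi_S^{-1}(t)\cap X$ are compact, or at least $\sigma$-compact by induction on height, for the relevant $t$. Their union over countably many $t$ gives $X$.

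The main obstacle is exactly this fibre step. Points of $X$ are not $G_\delta$, so the fibres can be large, and we need to lift the countable cover of the image to a cover of $X$. I would first attempt the following. For each $t$, the fibre is a closed subspace, hence Lindelöf scattered, and it lives in a $\Sigma$-product over $\Gamma\setminus S$ centred at $a$. Its height should drop relative to $X$ if $S$ is chosen inside a countable elementary submodel containing $X$. Failing that, I would use Lindelöf scattered plus the Arhangel'skii-type fact that, in such a space, the set of isolated points is countable modulo a compact remainder, iterating along the height.
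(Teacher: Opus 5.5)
There is a genuine gap, and it is exactly at the step you flag as the main obstacle: projecting to a countable set of coordinates does not reduce the problem. Take $X=\{0\}\cup\{e_\alpha:\alpha<\omega_1\}\subseteq\Sigma(2^{\omega_1},0)$, where $e_\alpha$ is the characteristic function of $\{\alpha\}$. This is the one-point compactification $A(\omega_1)$, a Lindel\"of scattered space of height $2$.

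For every countable $S\subseteq\omega_1$, however it is closed off (elementary submodel or not), the fibre
\[
\pi_S^{-1}(\pi_S(0))\cap X=\{0\}\cup\{e_\alpha:\alpha\notin S\}
\]
is again a copy of $A(\omega_1)$ of the same height. This is the general situation, not an accident of the example. After reducing to $X^{(\beta)}=\{p\}$, the fibre through $p$ is a Lindel\"of scattered subspace of the $\Sigma$-product over $\Gamma\setminus S$ and contains $p$. Nothing forces its height to drop, so the induction hypothesis never applies to it, and you are back to the original problem.

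Your fallback does not rescue this. As a statement about Lindel\"of scattered spaces, ``isolated points countable modulo a compact remainder'' is false. The one-point Lindel\"ofication $L(\omega_1)$ is Lindel\"of, scattered, of height $2$, and its compact subsets are finite, so it is not $\sigma$-compact. As a statement about subspaces of $\Sigma$-products, it is essentially what has to be proved.

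Two further problems. First, the claim that $\pi_S(X)$ is scattered is unjustified: continuous images of Lindel\"of scattered spaces need not be scattered (a countable discrete space maps onto $\mathbb{Q}$), and you give no mechanism by which the choice of $S$ prevents this. Second, Lindel\"ofness does not leave only countably many isolated points per level, as $A(\omega_1)$ shows.

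The missing idea is a way to handle a single top point $p$. In the paper this statement is obtained as a corollary of Theorem \ref{main}: $\Sigma$-products of first-countable spaces are $W$-spaces, and the $W$-property is hereditary. In the proof of Theorem \ref{main}, Lindel\"ofness and induction on height are used only for two things:
\begin{itemize}
\item to reduce to the case $X^{(\beta)}=\{p\}$;
\item to know that $A\cap X$ is $\sigma$-compact for every clopen $A$ missing $p$ in a zero-dimensional compactification $K$.
\end{itemize}
The point $p$ itself is handled by Lemma \ref{main lemma}. There the winning strategy at $p$, together with a Gruenhage-style closing-off over subfamilies of the cover (not over coordinates), turns a clopen cover of $K\setminus\{p\}$ into an open refinement $\VV$ with $\bigcup\VV=K\setminus\{p\}$ that is point-finite on $X$. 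Each $V\setminus X$ with $V\in\VV$ is then $G_\delta$ in $K$. Tkachuk's Lemma \ref{lemma Tkachuk} then makes $K\setminus X$ a $G_\delta$, so $X$ is $F_\sigma$ in $K$ and hence $\sigma$-compact. Nothing in your proposal plays the role of this point-finite refinement at $p$.
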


The assumptions on $X$ in the above theorem involves an external structure, namely, the $\Sigma$-product. One may wonder if Theorem \ref{thm Aviles, K.} can be generalized in such a way that $\sigma$-compactness of a Lindel\"of scattered space $X$ follows from an internal topological property of $X$.

The notion of a $W$-space was introduced by Gruenhage in \cite{Gr1} as a generalization of first-countability. It is defined in terms of a two-player topological game, called the $W$-game. Let us recall that given a space $X$ and a point $x\in X$, the \textit{$W$-game at $x$} is a game with $\omega$-many innings, played by two players: player I and player II. In the $n$-th round of the game ($n=1,2,\ldots$), player I picks on open neighborhood $U_n$ of $x$ in $X$ and player II responds by choosing a point $x_n\in U_n$. Player I wins the game if the sequence $(x_n)_{n=1}^\infty$ converges to $x$. Otherwise, player II wins. We say that a point $x\in X$ is a \textit{$W$-point} if player I has a winning strategy in the $W$-game at $x$. A space $X$ is a \textit{$W$-space} if all of its points are $W$-points.

It is easy to see that any first countable space is a $W$-space and any subspace of a $W$-space is a $W$-space itself. It is also known that any $\Sigma$-product of a family of $W$-spaces is a $W$-space (see \cite{Gr}). This motivated Tkachuk to ask the following natural question (see \cite{Tk2}): 

\begin{question}(Tkachuk)\label{question}
Let $X$ be a Lindel\"of scattered $W$-space. Is $X$ $\sigma$-compact? 
\end{question}

The purpose of this note is to answer the above question in the affirmative. 
We will prove the following:

\begin{thrm}\label{main}
If $X$ is a Lindel\"of scattered $W$-space, then $X$ is $\sigma$-compact.
\end{thrm}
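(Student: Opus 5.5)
We argue by contradiction, following the strategy that Tkachuk and then Avil\'es and the author used for $\Sigma$-products, but replacing each use of the product structure by a strategy of player I in the $W$-game. The tool is the standard characterization of $\sigma$-compactness for Lindel\"of scattered spaces. Let $X$ be Lindel\"of scattered and let $K\subseteq X$ be compact. Then $X$ is $\sigma$-compact if and only if every nonempty closed subspace $F\subseteq X$ has a nonempty relatively open subset with compact closure. Equivalently, if $X$ is not $\sigma$-compact, there is a nonempty closed $F\subseteq X$ which is \emph{nowhere locally compact}: no nonempty relatively open subset of $F$ has compact closure. (Sketch: let $O$ be the union of all open sets with $\sigma$-compact closure. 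By Lindel\"ofness $O$ is $\sigma$-compact. Its complement $F$ is closed and, if nonempty, nowhere locally compact.) So assume $F$ is such a set. Since $F$ is again a Lindel\"of scattered $W$-space, we may replace $X$ by $F$. Thus $X\neq\emptyset$ and no nonempty open subset of $X$ has compact closure. We aim to contradict Lindel\"ofness.

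\textbf{Step 1 (a tree of isolated points).} Fix for each $x\in X$ a winning strategy $\sigma_x$ for player I at $x$. We may assume the neighborhoods it produces decrease along each play. Since $X$ is scattered and nowhere locally compact, every nonempty open $U$ contains points of every level below the height of $X$ that meet $U$. In particular $U$ contains a closed discrete infinite family of points, or at least a sequence with no convergent subsequence inside any compact subset. We then build, by recursion on finite sequences $s\in\omega^{<\omega}$, the following data: points $x_s\in X$, partial plays of the $W$-games at $x_s$, and open sets $V_s$ with $\overline{V_{s^\frown n}}\subseteq V_s$. Here $V_{s^\frown n}$ are chosen inside the current move of $\sigma_{x_t}$ for every initial segment $t\subseteq s$. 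The point $x_{s^\frown n}$ is used as player II's next move in the games at all $x_t$, $t\subseteq s$. Because every $\sigma_{x_t}$ wins, along every branch $b\in\omega^\omega$ the sequence $(x_{b|k})_k$ converges to each earlier $x_{b|j}$. To make this non-degenerate we use scatteredness: choose $x_{s^\frown n}$ of strictly smaller Cantor--Bendixson rank in a suitable sense, or distinct points forming a discrete set. That forces a contradiction unless the tree structure collapses in a controlled way.

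\textbf{Step 2 (contradiction with Lindel\"ofness).} The correct way to use Step 1 is as follows. For each $x$, the set of all possible responses of player II against $\sigma_x$ yields a family of countably many neighborhoods ``along plays''. By Lindel\"ofness we pass to countable subcovers of $\{\sigma_x(\emptyset):x\in X\}$ and, recursively, of the second moves, and so on. This gives a countable tree $T$ of points $x_s$ such that the sets $\sigma_{x_s}(\text{play so far})$ cover the relevant closed remainders. Let $K_s$ be the closure of the set of points that appear along plays through $s$. The winning property shows each $K_s$ is a convergent-sequence-type set, namely compact, being a countable union of convergent sequences together with their limits arranged so that every sequence has a cluster point. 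We then show $X=\bigcup_{s\in T}K_s$. If some $y\notin\bigcup K_s$, we follow the countable covers, each time choosing $s^\frown n$ whose neighborhood contains $y$. This produces a play against some strategy in which $y$ itself, or points approximating $y$, are played and the sequence fails to converge, contradicting that the strategies win. This shows $X$ is $\sigma$-compact, contradicting nowhere local compactness.

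\textbf{Main obstacle.} The delicate step is proving compactness of the sets $K_s$ and that they exhaust $X$. The $W$-game only controls one sequence at a time, whereas compactness concerns arbitrary nets. Here scatteredness is essential. I would first try induction on Cantor--Bendixson rank: show that each $K_s$ is a closed scattered set in which every point's strategy makes the remainder of isolated points converge, hence it is countably compact. Being Lindel\"of, it is then compact. If this fails, the fallback is to mimic the argument of Theorem \ref{thm Aviles, K.} with $\Sigma$-product coordinates replaced by the countable information recorded by the strategies. The analogue of ``countable support'' is then the countable set of neighborhoods appearing in plays.
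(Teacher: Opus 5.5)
\textbf{There is a genuine gap: the opening reduction is false, and with it the whole contradiction scheme.} In a scattered space every nonempty closed $F$ has an isolated point $x$. Then $\{x\}$ is a relatively open subset of $F$ with compact closure. So no nonempty scattered space is nowhere locally compact, and your ``standard characterization'' would make every Lindel\"of scattered space $\sigma$-compact without using the $W$-property at all. That is false. The one-point Lindel\"ofication $D\cup\{\infty\}$ of an uncountable discrete set $D$ (basic neighborhoods of $\infty$ are co-countable) is a regular Lindel\"of scattered space whose compact subsets are finite, so it is not $\sigma$-compact. The error in your sketch is the sentence ``by Lindel\"ofness $O$ is $\sigma$-compact'': $O$ is open, and open subspaces of Lindel\"of spaces need not be Lindel\"of. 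In this example $O=D$ and $F=\{\infty\}$, which is compact. For Lindel\"of scattered spaces $\sigma$-compactness is not a local property, which is exactly why the theorem is nontrivial. Since your Step 2 ends by trying to prove $\sigma$-compactness directly, the reduction was not doing any work anyway.

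Step 2 itself has no working mechanism. It is unclear what the ``second moves'' are supposed to cover, since $\sigma_x$'s second move depends on player II's reply, which ranges over all of $\sigma_x(\emptyset)$. Compactness of $K_s$ is asserted, not proved; a countable union of convergent sequences with their limits is not compact in general. The $W$-game only controls sequences played against one strategy at one point, not closures of countable sets. Following a branch for $y\notin\bigcup_s K_s$ does not produce a single play, because the neighborhoods come from strategies at different points $x_{b|k}$. Within any single game, a winning strategy simply eventually excludes $y$, so no contradiction arises. Scatteredness also plays no identifiable role in Step 2, yet it must: the irrationals and the Sorgenfrey line are first countable (hence $W$-spaces) and Lindel\"of, but not $\sigma$-compact.

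The missing idea is the one in the paper:
\begin{enumerate}
\item Induct on Cantor--Bendixson height to reduce to $X^{(\beta)}=\{p\}$, place $X$ densely in a zero-dimensional compactification $K$, and aim to show that $K\setminus X$ is a $G_\delta$ in $K$.
\item Use the strategy only at the single point $p$, inside Gruenhage's closing-off argument applied to a clopen cover $\mathscr{U}$ of $K\setminus\{p\}$. For $\phi$-closed $\mathscr{A}\subseteq\mathscr{U}$, the closure of the set $Z(\mathscr{A})$ of top-level points of finite intersections of the compact pieces $K_n(A)$ stays inside $\bigcup\mathscr{A}\cup\{p\}$. These pieces exist by the inductive hypothesis.
\item Combine this with the stabilization of heights along decreasing chains of compact scattered sets to get an open refinement of $\mathscr{U}$ covering $K\setminus\{p\}$ that is point-finite on $X$.
\item Apply Tkachuk's lemma to conclude that $K\setminus X$ is a $G_\delta$, so $X$ is $F_\sigma$ in $K$ and hence $\sigma$-compact.
\end{enumerate}
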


\section{Notation}

We use standard set-theoretic and topological notation.
For a set $X$, by $[X]^{<\omega}$ we denote the family of all finite subsets of $X$, whereas the symbol $X^{<\omega}$ stands for all finite sequences in $X$. By $\ex(X)$ we will denote the power set of $X$, i.e., the family of all subsets of $X$.
If $X$ is a topological space and if $x\in X$, then by $\tau(X,x)$ we denote the family of all open neighborhoods of $x$ in $X$. 

We follow Gruenhage's terminology \cite{Gr} for the $W$-game. Since the strategy $\sigma$ of the $W$-game at $x$ for player I depends only on the moves of player II, we can view such a strategy as a function $\sigma:X^{<\omega}\to \tau(X,x)$ that assigns to each finite sequence in $X$ an open neighborhood of $x$. We say that a finite sequence $s=(x_1,\ldots,x_n)\in X^{<\omega}$ is a \textit{$\sigma$-sequence} if $s=\emptyset$ or $x_{1}\in \sigma(\emptyset)$ and $x_{i+1}\in \sigma(x_1,\ldots ,x_i)$ for $i=1,\ldots ,n-1$. An infinite sequence $(x_1,x_2,\ldots)$ is a \textit{$\sigma$-sequence} if all of its initial segments are $\sigma$-sequences. The strategy $\sigma$ is winning if every infinite $\sigma$-sequence converges to $x$. Note that if $s\in X^{<\omega}$ is not a $\sigma$-sequence, then we may define $\sigma(s)$ arbitrarily, say $\sigma(s)=X$.

In the sequel, the $W$-game will be used in the following context. We will work with a pair of spaces $X\subseteq K$ such that $X$ is dense in $K$ (in fact, $K$ will be a compactification of $X$) and $p$ will be a $W$-point in the space $X$. If $\sigma$ is a winning strategy for player I in the $W$-game at $p$, then we can assume that for any $s\in X^{<\omega}$, the set $\sigma(s)$ is a neighborhood of $p$ in $K$, i.e., $\sigma(s)\in \tau(K,p)$. Indeed, one can consider a new game in which, in the $n$-th round, player I picks a set $V_n\in \tau(K,p)$ and player II responds by choosing a point $x_n\in X\cap V_n$ (this can be done because $X$ is dense in $K$). It is readily seen that player I has a winning strategy in this new game if and only if player I has a winning strategy in the $W$-game at $p\in X$. Thus, in the situation described above we will always assume that the moves of player I, given by a strategy in the $W$-game, are open in $K$.

For a given space $X$, by $X'$ we denote the set of all accumulation points of $X$. For an ordinal $\alpha$ we define the \textit{Cantor-Bendixson $\alpha$-th derivative} $X^{(\alpha)}$ recursively, as follows:
\begin{itemize}
\item $X^{(0)}=X$
\item $X^{(\alpha+1)}=(X^{(\alpha)})'$
\item $X^{(\alpha)}=\bigcap_{\beta<\alpha}X^{(\beta)}$ if $\alpha$ is a limit ordinal.
\end{itemize}

The space $X$ is scattered if and only if $X^{(\alpha)}=\emptyset$ for some ordinal $\alpha$. For any scattered space $X$, we define its \textit{Cantor-Bendixson rank $ht(X)$} as the minimal ordinal $\alpha$ for which $X^{(\alpha)}=\emptyset$. It is worth noting that if $X$ is scattered compact, then $ht(X)$ is always a successor cardinal. 

A set that is both closed and open is referred to as a \textit{clopen set}. A space $X$ is \textit{zero-dimensional} if it has a base consisting of clopen sets.

\section{Results}

Our approach to prove Theorem \ref{main} is similar to the one presented in \cite{AK}. The main difference is that we replace \cite[Lemma 2.4]{AK} with Lemma \ref{main lemma} below, whose proof adapts some ideas of Gruenhage from \cite{Gr}. The following lemma, also used in \cite{AK}, is due to Tkachuk \cite{Tk1}.
\begin{lemma}\cite[Lemma 3.1]{Tk1}\label{lemma Tkachuk}
Suppose that $K$ is a space and let $X\subseteq K$ be a subspace of $K$. Let $\{G_t:t\in T\}$ be an indexed family of $G_\delta$-subsets of $K$ such that $G_t\cap X=\emptyset$ for all $t\in T$. Assume further that there is a family $\{U_t:t\in T\}$ of open subsets of $K$ such that $G_t\subseteq U_t$ for all $t\in T$ and, for every $x\in X$, the set $\{t\in T:x\in U_t\}$ is finite. Then there exists a $G_\delta$-subset $G$ of $K$ such that $\bigcup_{t\in T}G_t\subseteq G\subseteq K\setminus X$.
\end{lemma}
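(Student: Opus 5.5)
Write each $G_t=\bigcap_{n\in\omega}G_t^n$ with $G_t^n$ open in $K$. Intersecting with $U_t$ and with the earlier terms, I may assume $U_t\supseteq G_t^0\supseteq G_t^1\supseteq\cdots$. The goal is to build open sets $W_n\subseteq K$ such that $\bigcup_t G_t\subseteq W_n$ for every $n$, and such that $\bigcap_n W_n$ misses $X$. Then $G=\bigcap_n W_n$ is the required $G_\delta$-set.

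The natural candidate is
\[
W_n=\bigcup_{t\in T}G_t^n .
\]
This is open and contains $\bigcup_t G_t$. It remains to check that $\bigcap_n W_n\cap X=\emptyset$. Fix $x\in X$. By hypothesis, the set $F_x=\{t\in T:x\in U_t\}$ is finite. Since $G_t^n\subseteq U_t$, the point $x$ can belong to $G_t^n$ only when $t\in F_x$.

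For each $t\in F_x$ we have $x\notin G_t$, because $G_t\cap X=\emptyset$. So there is some $n_t$ with $x\notin G_t^{n_t}$. Because the sets $G_t^n$ decrease in $n$, the same holds for every $n\ge n_t$. Set $N=\max_{t\in F_x}n_t$, or $N=0$ if $F_x$ is empty. Then $x\notin G_t^N$ for every $t\in T$, that is, $x\notin W_N$. Hence $G=\bigcap_n W_n$ satisfies $\bigcup_t G_t\subseteq G\subseteq K\setminus X$.

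The only real point is the use of point-finiteness. Finiteness of $F_x$ is what lets us take a single uniform index $N$ that works for all relevant $t$ at once. Monotonicity of the $G_t^n$ is what makes the maximum $N$ suffice. Neither step involves any hard estimate; the argument needs no regularity and is purely set-theoretic.
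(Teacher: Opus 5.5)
Your proof is correct and complete. Normalizing the approximants so that $U_t\supseteq G_t^0\supseteq G_t^1\supseteq\cdots$ lets point-finiteness give each $x\in X$ a single index $N$ with $x\notin\bigcup_{t}G_t^N$. The paper does not reprove this lemma (it is quoted from Tkachuk), and your argument is the standard proof of it.
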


\begin{defin}
    Let $X$ be a subspace of a space $K$ and let $\UU$ be a family of subsets of $K$. We say that $\UU$ is \textit{point-finite in $X$} if for every $x\in X$ the collection $\{U\in \UU:x\in U\}$ is finite.
\end{defin}

\begin{defin}
Let $M$ be a set and let $\phi:\ex(M)\to \ex(M)$ be a function. A subset $A\subseteq M$ is called \textit{$\phi$-closed} if $\phi(A)\subseteq A$.
\end{defin}

The following fact was noted by Gruenhage.
\begin{lemma}\cite[Lemma 3.6]{Gr}\label{lemma_function_phi}
Let $M$ be a set and let (P) be a certain property of subsets of $M$. Suppose that $\phi:\ex(M)\to \ex(M)$ is a function satisfying the following two conditions:
\begin{enumerate}[(a)]
\item $|\phi(A)|\leq \max\{\omega,|A|\}$ for all $A\in \ex(M)$ and
\item  If for some ordinal $\kappa$, $A$ is an increasing union of sets $\{A_\alpha:\alpha<\kappa\}$, then $\phi(A)$ is the increasing union of the sets $\{\phi(A_\alpha):\alpha<\kappa\}$.
\end{enumerate}
Then $M$ has the property (P) whenever (P) satisfies the following two conditions:
\begin{enumerate}[(i)]
\item Every countable $\phi$-closed subset of $M$ has (P) and
\item Every increasing union of $\phi$-closed sets satisfying (P) satisfies (P) as well.
\end{enumerate}
\end{lemma}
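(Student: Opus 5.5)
This is the closure-argument lemma of Gruenhage, so I will argue by transfinite induction on $|M|$, building a continuous increasing chain of $\phi$-closed sets that exhausts $M$. If $M$ is countable, first close $M$ off: $M$ itself is $\phi$-closed, because $\phi(M)\subseteq M$ holds trivially, as $\phi$ maps into $\ex(M)$. Then (i) gives that $M$ has (P).

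Now let $|M|=\kappa>\omega$, and assume inductively that the statement holds for all sets of smaller cardinality. The inductive hypothesis is not really needed: I only use (i) and (ii) on subsets of $M$ itself, so I can argue directly. Enumerate $M=\{m_\alpha:\alpha<\kappa\}$.

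\textbf{Step 1: closure operator.} For $A\subseteq M$, define $A_0=A$ and $A_{n+1}=A_n\cup\phi(A_n)$, and set $\mathrm{cl}(A)=\bigcup_n A_n$. By (a), $|\mathrm{cl}(A)|\le\max\{\omega,|A|\}$. The set $\mathrm{cl}(A)$ is the increasing union of the $A_n$, so by (b) $\phi(\mathrm{cl}(A))=\bigcup_n\phi(A_n)\subseteq\bigcup_n A_{n+1}$. Hence $\mathrm{cl}(A)$ is $\phi$-closed.

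\textbf{Step 2: the chain.} Put $B_0=\mathrm{cl}(\emptyset)$, $B_{\alpha+1}=\mathrm{cl}(B_\alpha\cup\{m_\alpha\})$, and $B_\lambda=\bigcup_{\alpha<\lambda}B_\alpha$ for limit $\lambda$. Limit stages are $\phi$-closed by (b), since $\phi(B_\lambda)=\bigcup_{\alpha<\lambda}\phi(B_\alpha)\subseteq B_\lambda$. Every $B_\alpha$ has size $<\kappa$, namely $\le\max\{\omega,|\alpha|\}$, and $\bigcup_{\alpha<\kappa}B_\alpha=M$.

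\textbf{Step 3: induction along the chain.} The plan is to show every $B_\alpha$ has (P), and then apply (ii) to the chain to conclude that $M=\bigcup_{\alpha<\kappa}B_\alpha$ has (P).
\begin{itemize}
\item Limit stages follow directly from (ii).
\item The base $B_0$ is countable, so (i) applies.
\item The successor step is the main obstacle: $B_{\alpha+1}$ is $\phi$-closed but uncountable, and (i) and (ii) only directly handle countable sets and unions of chains.
\end{itemize}

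For the successor step, I would apply the whole construction recursively inside $B_{\alpha+1}$, by induction on the cardinality of the $\phi$-closed set. The claim to prove is: \emph{every $\phi$-closed $C\subseteq M$ has (P)}, by induction on $|C|$.

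So let $C$ be $\phi$-closed. If $C$ is countable, (i) applies. Otherwise enumerate $C=\{c_\alpha:\alpha<|C|\}$ and build the chain as in Step 2 starting from $\mathrm{cl}(\emptyset)$, adding $c_\alpha$ at stage $\alpha+1$. Each link stays inside $C$: since $C$ is $\phi$-closed, $\mathrm{cl}(A)\subseteq C$ whenever $A\subseteq C$. Each link is $\phi$-closed, has cardinality $<|C|$, and has (P) by the induction hypothesis on cardinality. The union of the chain is $C$, so (ii) gives (P) for $C$.

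Applying the claim to $C=M$, which is $\phi$-closed, finishes the proof. The only delicate point is the cardinality bound $|B_\alpha|<|C|$ for $\alpha<|C|$. This uses $|C|$ uncountable and (a) together with a cardinal: $|B_\alpha|\le\max\{\omega,|\alpha|\}<|C|$.
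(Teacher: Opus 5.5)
Your final argument is correct. The paper gives no proof of this lemma; it quotes it from Gruenhage. Your argument is the standard one: by induction on $|C|$, every $\phi$-closed $C\subseteq M$ has (P). An uncountable such $C$ is the union of a continuous increasing chain of $\phi$-closed sets $B_\alpha\subseteq C$, $\alpha<|C|$, with $|B_\alpha|\le\max\{\omega,|\alpha|\}<|C|$, so (ii) applies; taking $C=M$ finishes.

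The one step you leave unjustified is $\mathrm{cl}(A)\subseteq C$ for $A\subseteq C$. Being $\phi$-closed is not enough for this: you also need $\phi(A_n)\subseteq\phi(C)$, i.e.\ monotonicity of $\phi$. Monotonicity does follow from (b), applied to the two-term chain $A_n\subseteq C$ (with $\kappa=2$), and you should say so.

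Your preliminary attempt on $M$ directly (Steps 2--3, together with the remark that no inductive hypothesis is needed) is superseded by the cardinality induction and can be dropped.
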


The next lemma plays a key role in proving Theorem \ref{main}. The proof is a slight modification of a reasoning of Gruenhage (see \cite[Theorem 3.7]{Gr}). For the reader’s convenience, we give a complete argument. 

\begin{lemma}\label{main lemma}
Let $K$ be a zero-dimensional compact space and let $X\subseteq K$ be a dense scattered subspace of $K$. Let $p\in X$ be a W-point in $X$. Assume further that for any clopen subset $A$ of $K$ with $p\notin A$, the intersection $A\cap X$ is $\sigma$-compact. If $\mathscr{U}$ is a family of clopen subsets of $K$ such that $\bigcup\mathscr{U}=K\setminus\{p\}$, then $\UU$ has an open refinement $\mathscr{V}$ such that $\bigcup\VV=\bigcup\UU$ and for any $x\in X$ the collection $\{V\in \VV:x\in V\}$ is finite. 
\end{lemma}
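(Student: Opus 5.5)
We will follow Gruenhage's proof of \cite[Theorem 3.7]{Gr} and apply Lemma \ref{lemma_function_phi} with $M=\UU$. We assume $\UU$ is nonempty, which is the only nontrivial case. Fix a winning strategy $\sigma$ for player I in the $W$-game at $p$. As explained in Section 2, every value $\sigma(s)$ may be taken to be open in $K$.

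\textbf{Step 1: a countable subfamily at each stage.} For each $U\in\UU$, the set $U\cap X$ is $\sigma$-compact by hypothesis, since $U$ is clopen and $p\notin U$. Fix a countable dense subset $D_U$ of $U\cap X$. For $\AAA\subseteq\UU$ let $D(\AAA)=\bigcup_{U\in\AAA}D_U$. Define $\phi(\AAA)\subseteq\UU$ so that, for every finite $\sigma$-sequence $s$ with terms in $D(\AAA)$, the following holds: $\phi(\AAA)$ contains finitely many members of $\UU$ covering the compact set $K\setminus\sigma(s)$. Such a finite subcover exists because $K\setminus\sigma(s)$ is compact and does not contain $p$. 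We also let $\phi(\AAA)$ include finitely many members of $\UU$ covering $K\setminus W$ for clopen $W\ni p$ from a suitable countable family generated by $\AAA$. Conditions (a) and (b) of Lemma \ref{lemma_function_phi} then hold: $\phi(\AAA)$ depends on countably many choices per finite sequence, and it is monotone and continuous along increasing unions because sequences are finite.

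\textbf{Step 2: the property (P) and the countable case.} Let (P) of a $\phi$-closed family $\AAA$ say the following. There is an open refinement $\VV_\AAA$ of $\AAA$, consisting of clopen sets, such that $\bigcup\VV_\AAA=\bigcup\AAA$ and $\VV_\AAA$ is point-finite in $X$. In addition, the construction is coherent: for $\AAA\subseteq\BB$, the family $\VV_\BB$ restricted away from $\bigcup\AAA$ extends $\VV_\AAA$. For a countable $\AAA=\{A_n\}$, we take the disjointification $V_n=A_n\setminus\bigcup_{i<n}A_i$. These sets are clopen and pairwise disjoint, hence point-finite.

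\textbf{Step 3: increasing unions (the main obstacle).} Let $\AAA=\bigcup_{\alpha<\kappa}\AAA_\alpha$ be an increasing union. We set $\VV_\AAA=\bigcup_\alpha\{V\setminus\bigcup\AAA_{<\alpha}:V\in\VV_{\AAA_\alpha}\}$, which is the layered disjoint-in-levels construction. The difficulty is showing that each $x\in X$ lies in only finitely many levels. At this point we use $\phi$-closedness together with the $W$-game. Suppose $x$ meets infinitely many new levels $\alpha_1<\alpha_2<\dots$. We then build a $\sigma$-sequence $x_1,x_2,\dots$ with each $x_{n+1}\in\sigma(x_1,\ldots,x_n)$, taking $x_{n+1}$ from the dense sets $D(\AAA_{\alpha_{n+1}})$ close to $x$. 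This is possible because $\phi$-closedness of $\AAA_{\alpha_n}$ ensures that $K\setminus\sigma(x_1,\dots,x_n)$ is already covered by $\bigcup\AAA_{\alpha_n}$. Consequently, the points of the new levels, including the neighborhood of $x$, lie inside $\sigma(x_1,\dots,x_n)$. Since $\sigma$ is winning, $x_n\to p$. However, all the $x_n$ lie in a fixed clopen neighborhood of $x$ that misses $p$. If $x\ne p$, this is a contradiction. The point $x=p$ itself is not covered by $\bigcup\UU$ at all.

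The delicate part is arranging that $x_n$ can be chosen from the countable sets $D(\AAA_{\alpha_n})$ while staying in a fixed clopen neighborhood of $x$. We handle this by first fixing a clopen $W\ni x$ with $p\notin W$. We then use density of $D_U$ in $U\cap X$ for the members $U$ of the new level that contain $x$; a point of $D_U\cap W$ exists because $U\cap W\cap X$ is a nonempty open subset of $U\cap X$.

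Having established (i) and (ii), Lemma \ref{lemma_function_phi} gives (P) for $M=\UU$. This yields the required $\VV$.
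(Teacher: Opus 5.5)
Your proposal has genuine gaps, and the first one already breaks Step 1. You need, for each $U\in\UU$, a \emph{countable} set $D_U$ dense in $U\cap X$. However, a $\sigma$-compact (even compact) scattered space need not be separable. For example, let $A(\omega_1)$ be the one-point compactification of a discrete set of size $\omega_1$, and let $K=X$ be the one-point compactification of $\omega\times A(\omega_1)$, with $p$ the point at infinity. Then $K$ is compact, zero-dimensional and scattered, and $p$ has countable character, so it is a $W$-point. Every clopen $A\not\ni p$ is compact, yet the clopen set $\{0\}\times A(\omega_1)$ is not separable. Without countable $D_U$, a countable family $\AAA$ would have to handle uncountably many points, so your $\phi$ loses condition (a) of Lemma \ref{lemma_function_phi}. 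The paper avoids this by feeding the strategy only the countably many points of $Z(\AAA)$. These are the finite top Cantor--Bendixson levels $Z(C)$ of finite intersections $C$ of the compact pieces $K_n(A)$, $A\in\AAA^*$. This is exactly where scatteredness of $X$ enters, and it is telling that your argument never uses it.

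Step 3 does not work either. The sets $V\setminus\bigcup\AAA_{<\alpha}$ are closed in $V$ but in general not open. They are open iff $V\cap\overline{\bigcup\AAA_{<\alpha}}\subseteq\bigcup\AAA_{<\alpha}$, which you never prove, so $\VV_\AAA$ is not an open refinement. On the other hand, with this definition the sets $\bigcup\AAA_\alpha\setminus\bigcup\AAA_{<\alpha}$ are pairwise disjoint. So no point lies in two levels, and the scenario your $W$-game argument refutes cannot occur; the real issue is openness.

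The game argument itself is also flawed. Since $x\in\bigcup\AAA_{\alpha_1}\subseteq\bigcup\AAA_{\alpha_n}$, nothing puts $x$ or a neighbourhood of it inside $\sigma(x_1,\dots,x_n)$. Density of $D_U$ yields a point of $U\cap W$, but not one outside $\bigcup\AAA_{\alpha_n}$, and that new part may have empty interior.

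In the paper the game is played at a point $x\in\overline{Z(\AAA)}\setminus(\bigcup\AAA\cup\{p\})$. Precisely because $x\notin\bigcup\AAA$, one gets $x\in\sigma(s)$ for every $\sigma$-sequence $s$ from $Z(\AAA)$. This yields $\overline{Z(\AAA)}\subseteq\bigcup\AAA\cup\{p\}$ for $\phi$-closed $\AAA$ (Claim 1). By compactness, one then removes from each new $A\in\BB'_\alpha$ only finitely many earlier members. This keeps the pieces clopen and disjoint from $\overline{Z(\bigcup_{\beta<\alpha}\BB_\beta)}$.

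Point-finiteness across levels is then a separate argument using scatteredness (Claim 2). The heights of $C_k=K_{n_1}(B_1)\cap\dots\cap K_{n_k}(B_k)$ stabilize, giving $\emptyset\neq C_{m+1}\cap Z(C_m)\subseteq B_{m+1}\cap Z(\bigcup_{\beta<\alpha_{m+1}}\BB_\beta)$, a contradiction.

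Finally, the ``coherence'' clause in your (P) is not a property of a single subfamily of $\UU$, so Lemma \ref{lemma_function_phi} does not apply to it. It is not needed anyway.
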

\begin{proof}
Let $\sigma$ be a winning strategy for player I in the $W$-game played at $p$. % comment on convension $\sigma(F)$ and player I pick open subsets of K
For any $\AAA\subseteq \UU$ we set
$$\AAA^*=\left\{A\setminus\bigcup\FF:A\in \AAA \mbox{ and }\FF\in[\AAA]^{<\omega}\right\}.$$
Note that for any $\AAA\subseteq \UU$, the family $\AAA^*$ consists of clopen subsets of $K$ and  $\AAA\subseteq \AAA^*\subseteq \UU^*$. It is also clear that $|\AAA^*|\leq \max\{\omega, |\AAA|\}$. If $A\in \UU^*$, then $A$ is a clopen subset of $K$ missing $p$. So by our assumption, the set $A\cap X$ is $\sigma$-compact (empty if $A=\emptyset$), for any $A\in \UU^*$. Thus, for each $A\in \UU^*$ we can fix a countable family $\{K_n(A):n\in\omega\}$ of compact subsets of $X$ (possibly empty) satisfying
$$A\cap X=\bigcup_{n\in \omega}K_n(A).$$
In particular each of the sets $K_n(A)$ is scattered being a subset of a scattered space $X$. For $\AAA\subseteq \UU$, let
\begin{align*}
&\KK(\AAA^*)=\{K_n(A):A\in \AAA^* \mbox{ and } n\in \omega\} \mbox{ and }\\
&\widehat{\KK(\AAA^*)}=\left\{\bigcap \FF:\FF \in\left[\KK(\AAA^*)\right]^{<\omega} \right\}.
\end{align*}
In other words, $\widehat{\KK(\AAA^*)}$ is the family of all finite intersections of sets of the form $K_n(A)$, where $A\in \AAA^*$ and $n\in \omega$.
For a compact scattered set $C$ we define
\begin{equation*}
Z(C)=
  \left\{\begin{aligned}
  &\emptyset &&\mbox{if }C=\emptyset\\
&C^{(\gamma)}  &&\mbox{if }C\neq \emptyset\mbox{ and } ht(C)=\gamma+1.
\end{aligned}
 \right.
\end{equation*}
Note that $Z(C)$ is a finite set.
For $\AAA\subseteq \UU$, we set
$$Z(\mathscr{A})=\bigcup\left\{Z(C):C \in \widehat{\KK(\AAA^*)}\right\}.$$

If $\AAA\subseteq \UU$ and $s\in Z(\AAA)^{<\omega}$ is a $\sigma$-sequence, then the compact set $K\setminus \sigma(s)$ is contained in $K\setminus\{p\}=\bigcup\UU$ (note that $\sigma(s)$ is well defined because $s$ is a $\sigma$-sequence in $Z(\AAA)$, so in particular it is a $\sigma$-sequence in $X$). Hence, we can find a finite subfamily $\EE(s)\subseteq \mathscr{U}$ such that $K\setminus\sigma(s)\subseteq \bigcup \EE(s)$.  

Consider the following property (P) of a family $\AAA\subseteq \UU$:
\begin{equation}\tag{P}\label{propertyP}
\AAA \mbox{ has a point-finite in }X \mbox{ open refinement } \WW \mbox{ satisfying }\bigcup\WW=\bigcup\AAA.
\end{equation}

In order to prove the lemma, we need to show that the family $\UU$ satisfies the property \eqref{propertyP}. To this end we will use Lemma \ref{lemma_function_phi} with $M=\UU$.
Define a map $\phi:\ex(\mathscr{U})\to \ex(\mathscr{U})$ by the formula
$$\phi(\AAA)=\bigcup\{\EE(s):s\in Z(\AAA)^{<\omega} \mbox{ is a $\sigma$-sequence}\}.$$
It is readily seen that
\begin{equation*}\label{a}
    |\phi(\AAA)|\leq \max\{\omega,|\AAA|\}, \mbox{ for every } \AAA\subseteq \UU.   
\end{equation*}
It is also straightforward to verify that if, for some ordinal $\kappa$, $\AAA$ is an increasing union of families $\{\AAA_\alpha:\alpha<\kappa\}$, then $\phi(\AAA)$ is an increasing union of families $\{\phi(\AAA_\alpha):\alpha<\kappa\}$. So the map $\phi$ satisfies the conditions (a) and (b) of Lemma \ref{lemma_function_phi}. It remains to check that the property \eqref{propertyP} satisfies the conditions (i) and (ii) in Lemma \ref{lemma_function_phi}.

For (i), let $\mathscr{B}\subseteq \UU$ be a countable subfamily of $\UU$ (not necessarily $\phi$-closed). Enumerate $\mathscr{B}=\{B_1,B_2,\ldots\}$. Put $V_1=B_1$ and $V_n=B_n\setminus\bigcup_{i=1}^{n-1}B_i$ for $n>1$. Then $\mathscr{V}=\{V_1,V_2,\ldots\}$ is a pairwise disjoint clopen refinement of $\mathscr{B}$ such that $\bigcup\VV=\bigcup\mathscr{B}$. In particular, $\mathscr{B}$ has the property \eqref{propertyP}.

Let us verify condition (ii). To this end, fix an ordinal $\kappa$ and an increasing collection of $\phi$-closed families $\{\BB_\alpha\subseteq \UU:\alpha<\kappa\}$, each of which satisfying the property \eqref{propertyP}. Let $\BB=\bigcup_{{\alpha<\kappa}}\BB_\alpha$. We need to show that $\BB$ satisfies \eqref{propertyP} as well.
The proof of the following claim is identical as the proof of Claim 1 in \cite[Theorem 3.7]{Gr}. We present the argument here for the sake of completeness.
\begin{claim}
If $\AAA\subseteq \UU$ is a $\phi$-closed family, then $\overline{Z(\AAA)}\subseteq \bigcup\AAA\cup\{p\}$ (the closure is taken in $K$).
\end{claim}
\begin{proof}
Striving for a contradiction, suppose that there is a point $x\neq p$ such that
\begin{equation}\tag{$\ast$}\label{*}
x\in \overline{Z(\AAA)}\setminus \bigcup\AAA.
\end{equation}

Find a clopen neighborhood $W$ of $x$ in $K$ such that $p\notin W$. Inductively, construct points $x_1,x_2,\ldots\in X$ and open in $K$ sets $U_1,U_2,\ldots$ such that:
\begin{enumerate}[(1)]
\item $x_n\in Z(\AAA)\cap W\cap U_n$ for all $n$,
    \item $U_1=\sigma(\emptyset)$ and $U_{n+1}=\sigma(x_1,\ldots ,x_n)$ for $n\geq 1$.
    \end{enumerate}

We begin by letting $U_1=\sigma(\emptyset)$. 
Since $\emptyset\in Z(\AAA)^{<\omega}$ is a $\sigma$-sequence and $\AAA$ is $\phi$-closed, we get $\EE(\emptyset)\subseteq \phi(\AAA)\subseteq \AAA$. Additionally, by definition of the family $\EE(\emptyset)$, we have $K\setminus U_1\subseteq \bigcup\EE(\emptyset)$. Hence, $K\setminus U_1\subseteq \bigcup\AAA$. We now infer from \eqref{*} that $x\in U_1$. So the set $W\cap U_1$ is an open neighborhood of $x$ in $K$ and thus, by \eqref{*}, we can find $x_1\in Z(\AAA)\cap W\cap U_1$. Let $k\geq 1$ and suppose that we have constructed points $\{x_1,\ldots ,x_k\}$ and open sets $U_1,\ldots ,U_k$ in such a way that the conditions (1) and (2) hold for $n=1,\ldots ,k$. In particular, $(x_1,\ldots ,x_k)$ is a $\sigma$-sequence. Let $U_{k+1}=\sigma(x_1,\ldots ,x_k)$. By (1), $(x_1,\ldots , x_k)\in Z(\AAA)^{<\omega}$. Since the family $\AAA$ is $\phi$-closed, we have $\EE((x_1,\ldots , x_k))\subseteq \phi(\AAA)\subseteq \AAA$.
Additionally, by definition of the family $\EE((x_1,\ldots , x_k))$, we have $K\setminus U_{k+1}\subseteq \bigcup\EE((x_1,\ldots , x_k))$. Hence, $K\setminus U_{k+1}\subseteq \bigcup\AAA$.
Condition \eqref{*} implies that $x\in U_{k+1}$ and we can find $x_{k+1}\in Z(\AAA)\cap W\cap U_{k+1}$. This finishes the inductive construction.

Conditions (1) and (2) imply that $(x_1,x_2,\ldots)$ is an infinite $\sigma$-sequence. Hence it must be convergent to $p$, because $\sigma$ is a winning strategy for player I. This however is impossible because, according to (1), the set $\{x_1,x_2,\ldots\}$ is contained in the set $W$ which is closed and misses $p$.
\end{proof}

For any $\alpha<\kappa$ define
$$\BB_\alpha'=\BB_\alpha\setminus\bigcup_{\beta<\alpha}\BB_\beta.$$
The family $\bigcup_{\beta<\alpha}\BB_\beta$ is $\phi$-closed being an increasing union of $\phi$-closed families (cf. Lemma \ref{lemma_function_phi} (b)). So, according to Claim 1, for each $\alpha<\kappa$ we have
$$\overline{Z\Big( \bigcup_{\beta<\alpha}\BB_\beta \Big)}\subseteq \bigcup_{\beta<\alpha}\BB_\beta\cup\{p\}.$$
Hence, if $\alpha<\kappa$ and  $A\in \BB_\alpha'$, then the compact set $A\cap \overline{Z\Big( \bigcup_{\beta<\alpha}\BB_\beta \Big)}\subseteq \bigcup_{\beta<\alpha}\BB_\beta$, because $p\notin \bigcup \UU\supseteq A$. Since the set $A\cap \overline{Z\Big( \bigcup_{\beta<\alpha}\BB_\beta \Big)}$ is compact, we can find a finite subfamily $\FF_\alpha(A)$ of $\bigcup_{\beta<\alpha}\BB_\beta$ with $$A\cap \overline{Z\Big( \bigcup_{\beta<\alpha}\BB_\beta \Big)}\subseteq \bigcup \FF_\alpha(A).$$
For a given $\alpha<\kappa$ we put
$$\BB_\alpha''=\left\{A\setminus\bigcup\FF_\alpha(A):A\in\BB_\alpha' \right\}.$$
Clearly, $\BB_\alpha''\subseteq \BB_\alpha^*$ and
\begin{equation}\tag{$\ast\ast$}\label{**}
 B\cap \overline{Z\Big( \bigcup_{\beta<\alpha}\BB_\beta \Big)}=\emptyset, \mbox{ for every }B\in \BB_\alpha''
\end{equation}
We have the following:
\begin{claim}
The family $\{\bigcup\BB_\alpha'':\alpha<\kappa\}$ is point finite in $X$.
\end{claim}
\begin{proof}
Striving for a contradiction, suppose that there is $x\in X$ and an increasing sequence of ordinals $\alpha_1<\alpha_2<\ldots$ below $\kappa$ such that $x\in \bigcup\BB_{\alpha_i}''$ for $i=1,2,\ldots$.
For each $i$ find $B_i\in \BB_{\alpha_i}''$ such that $x\in B_i$. Since, for all $i$, $B_i\in B_{\alpha_i}''\subseteq \BB_{\alpha_i}^*$, there is
a compact set $K_{n_i}(B_i)\subseteq B_i\cap X$ such that $x\in K_{n_i}(B_i)$. For $k\geq 1$, let
$$C_k= K_{n_1}(B_1)\cap\cdots\cap K_{n_k}(B_k).$$
The sets $C_k$ are nonempty (because $x\in C_k$ for all $k$), compact and scattered being subsets of $X$. So for each $k$, there is an ordinal $\eta_k$ such that $ht(C_k)=\eta_k+1$. The sequence $C_1\supseteq C_2\supseteq\ldots$ is decreasing, so $\eta_{k+1}\leq \eta_k$. Hence, there must be $m$ such that $\eta_{k}=\eta_m$, for $k\geq m$. It follows that
$Z(C_m)\supseteq Z(C_{m+1})$ whence
$$C_{m+1}\cap Z(C_m)\supseteq Z(C_{m+1}).$$
Since the latter set is nonempty, we have
\begin{equation}\tag{$\dagger$}\label{dagger}
\emptyset\neq C_{m+1}\cap Z(C_m)\subseteq K_{n_{m+1}}(B_{m+1})\cap Z(C_m)\subseteq B_{m+1}\cap Z(C_m).
\end{equation}

On the other hand, if $i\leq m$, then 
$B_i\in \BB_{\alpha_i}''\subseteq \BB_{\alpha_i}^*\subseteq \BB_{\alpha_{m}}^*$. So, for $i\leq m$, we have $K_{n_i}(B_{i})\in \KK(\BB_{\alpha_m}^*)$ whence $C_{m}=K_{n_1}(B_1)\cap\cdots\cap K_{n_{m}}(B_{m})\in \widehat{\KK(\BB_{\alpha_m}^*)}$. It follows that $$Z(C_m)\subseteq Z(\BB_{\alpha_m})\subseteq Z\Big(\bigcup_{\beta<\alpha_{(m+1)}}\BB_{\beta}\Big).$$
Combining this with $B_{m+1}\in \BB_{\alpha_{(m+1)}}''$, we infer from \eqref{**} that $B_{m+1}\cap Z(C_m)=\emptyset$, contradicting \eqref{dagger}.
\end{proof}
Returning to the proof of the lemma, recall that, by our assumption the family $\BB$ is an increasing union $\BB=\bigcup_{\alpha<\kappa}\BB_\alpha$ and the familes $\BB_\alpha$ satisfy \eqref{propertyP}. Thus, for each $\alpha<\kappa$, we can find an open refinement $\WW_\alpha$ of $\BB_\alpha$ such that $\bigcup\WW_\alpha=\bigcup\BB_\alpha$ and $\WW_\alpha$ is point-finite in $X$. Let
$$\VV_\alpha=\left\{ W\cap \left( \bigcup\BB_\alpha''\right):W\in \WW_\alpha \right\},$$
for every $\alpha<\kappa$. We claim that the family $\WW=\bigcup_{\alpha<\kappa}\VV_\alpha$ is an open refinement of $\BB$ such that $\bigcup\WW=\bigcup\BB$ and, for each $x\in X$, the set $\{W\in \WW:x\in W\}$ is finite. 

First, note that since the family $\BB_\alpha''$ consists of clopen subsets of $K$ and $\WW_\alpha$ is a family of open subsets of $K$, the family $\WW$ consists of open sets. Using the fact that $\WW_\alpha$ is a refinement of $\BB_\alpha$, it is immediate to see that $\WW$ refines $\BB$. In particular, $\bigcup\WW\subseteq \bigcup\BB$. To check that $\bigcup\WW=\bigcup\BB$, fix $x\in \bigcup\BB$ and consider $\lambda=\min\{\beta<\kappa:x\in \BB_\beta\}$. We can find $A\in \BB_\lambda'$ with $x\in A$. By definition of $\lambda$, we have $x\in A\setminus \bigcup\FF_\lambda(A)$ (recall that $\FF_\lambda(A)\subseteq \bigcup_{\beta<\lambda}\BB_\beta$), while $A\setminus \bigcup\FF_\lambda(A)\in \BB_\lambda''$. Hence $x\in \bigcup \BB_\lambda''$. Moreover, $\bigcup\WW_\lambda=\bigcup\BB_\lambda$, so there is $W\in \WW_\lambda$ with $x\in W$ whence $x\in W\cap \left( \bigcup\BB_\lambda''\right)\subseteq \bigcup\VV_\lambda\subseteq \bigcup \WW$. Applying Claim 2 and the fact that each family $\WW_\alpha$ is point-finite in $X$, one easily verifies that $\WW$ is point-finite in $X$. We conclude that the family $\BB$ has \eqref{propertyP}, and thus, \eqref{propertyP} satisfies condition (ii) in Lemma \ref{lemma_function_phi}. This finishes the proof.
\end{proof}

The following proposition is known. It is a direct consequence of \cite[Theorem 6]{Tel} and \cite[Theorem 5.1.2]{En}.

\begin{prop}\label{fact}
If $X$ is a Lindel\"of scattered space, then $X$ is zero-dimensional. In particular, $X$ has a zero-dimensional compactification.
\end{prop}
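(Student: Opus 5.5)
The plan is to establish the two claims in turn: first that a Lindel\"of scattered space $X$ is zero-dimensional, and then that it embeds densely in a zero-dimensional compact space.

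\emph{Step 1: $X$ is normal and strongly zero-dimensional.} Every regular Lindel\"of space is paracompact, hence normal. So it suffices to show that $X$ is strongly zero-dimensional, i.e.\ that disjoint closed sets $F,G$ can be separated by a clopen set. Zero-dimensionality then follows: for $x\in U$ with $U$ open, regularity lets us take $F=\{x\}$ and $G=X\setminus U$, and a separating clopen set is a clopen neighbourhood of $x$ inside $U$.

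\emph{Step 2: Separating closed sets, by induction on rank.} I would argue by transfinite induction on the Cantor--Bendixson rank, and expect this to be the main obstacle. For each point $x$ there is an open $V_x$ with $\overline{V_x}$ disjoint from $F$ or from $G$, and with $x$ isolated in $\overline{V_x}\cap X^{(\beta)}$, where $\beta$ is the rank of $x$. The inductive hypothesis should make $\overline{V_x}\setminus\{x\}$ well behaved, but this is awkward to carry out directly. Instead I would invoke Telg\'arsky's theorem \cite[Theorem 6]{Tel}: a Lindel\"of (in fact paracompact) scattered space satisfies $\dim X=0$. Concretely, the covering dimension is zero, so the two-set cover $\{X\setminus F, X\setminus G\}$ has a disjoint open refinement. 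Grouping its members gives disjoint open sets covering $X$, which are therefore clopen and separate $G$ from $F$.

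\emph{Step 3: Zero-dimensional compactification.} Once $X$ is zero-dimensional (indeed $\operatorname{ind}X=0$), \cite[Theorem 5.1.2]{En} applies. Alternatively, one can argue directly. Let $\mathcal C$ be the family of all clopen subsets of $X$, and map $X$ into the Cantor cube $2^{\mathcal C}$ by $x\mapsto(\chi_C(x))_{C\in\mathcal C}$. This map is continuous because each $C$ is clopen. It is injective and an embedding because clopen sets form a base and points are closed, so they can be separated by clopen sets. The closure of the image in $2^{\mathcal C}$ is then a zero-dimensional compactification of $X$.
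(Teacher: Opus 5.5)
Your proposal is correct and follows the paper's route: the paper proves the proposition only by citing Telg\'arsky's theorem \cite[Theorem 6]{Tel} together with a standard fact from Engelking \cite[Theorem 5.1.2]{En}, and you rely on the same theorem of Telg\'arsky. Your use of Morita's theorem (regular Lindel\"of implies paracompact), the passage from $\dim X=0$ to a clopen base via the two-set cover, and the explicit embedding into $2^{\mathcal C}$ just spell out the standard steps behind those citations.
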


Now, with Lemmas \ref{lemma Tkachuk} and \ref{main lemma} in hand, we can prove our main result.

\begin{proof}[Proof of Theorem \ref{main}]
 The proof goes by induction on $\alpha=ht(X)$, where $ht(X)$ is the Cantor-Bendixson rank of $X$. If $\alpha=1$ then $X$ is discrete and hence countable being Lindel\"of. So $X$ is $\sigma$-compact in this case. Suppose that any Lindel\"of scattered $W$-space of rank less than $\alpha$ is $\sigma$-compact and let $ht(X)=\alpha$. If $\alpha$ is a limit ordinal then we are easily done because in this case every point $x\in X$ has a clopen neighborhood (cf. Proposition \ref{fact}) $U_x$ such that $ht(U_x)<\alpha$. By the inductive assumption each $U_x$ is $\sigma$-compact. The Lindel\"of property of $X$ implies that $X$ can be covered by countably many sets $U_x$, whence $X$ is $\sigma$-compact.

 Suppose that $\alpha=\beta+1$ for some ordinal $\beta$. The top level $X^{(\beta)}$ of $X$ is closed and discrete. We can assume, without loss of generality that $X^{(\beta)}$ is a singleton. Indeed, in the general case (i.e, when $X^{(\beta)}$ is not necessarily a one-element set), for each $x\in X$ we can find a clopen neighborhood $U_x$ (see Proposition \ref{fact}) such that $U_x\cap X^{(\beta)}=\emptyset$ if $x\notin X^{(\beta)}$ (because $X^{(\beta)}$ is closed in $X$) and $U_x\cap X^{(\beta)}$ is a singleton for $x\in X^{(\beta)}$ (by discreteness of $X^{(\beta)}$). By the Lindel\"of property, $X$ a union of countably many sets $U_x$. Now, if $x\notin X^{(\beta)}$, then $U_x$ is $\sigma$-compact by the inductive assumption and hence, if we knew that scattered spaces, having the one-element top level, are $\sigma$-compact, we would be done because $U_x$ for $x\in X^{(\beta)}$ are such spaces.
So assume that $X^{(\beta)}=\{p\}$ for some $p\in X$. According to \ref{fact}, the space $X$ has a zero-dimensional compactification $K$. We can assume that $X\subseteq K$.
Since $X$ is a $W$-space, the point $p$ is a $W$-point in $X$. If $A\subseteq K$ is a clopen subset of $K$ such that $p\notin A$, then $A\cap X$ is Lindel\"of (being a closed subspace of a Lindel\"of space $X$), scattered of rank less than $\alpha$, because $p\notin A$. So by the inductive assumption, $A\cap X$ is $\sigma$-compact. This means that $X\subseteq K$ and $p$ satisfy the assumptions of Lemma \ref{main lemma}. For each $x\in K\setminus\{p\}$ find a clopen neighborhood $U_x$ of $x$ with $p\notin U_x$ and let 
$$\UU=\{U_x:x\in K\setminus \{p\}\}.$$
From Lemma \ref{main lemma}, we infer that $\UU$ has an open refinement $\VV$ such that $\bigcup \VV=\bigcup\UU=K\setminus \{p\}$ and the family $\VV$ is point-finite in $X$. Enumerate faithfully $\VV=\{V_\gamma:\gamma<\kappa\}$. Fix an arbitrary $\gamma<\kappa$. Since $\VV$ is a refinement of $\UU$, the set $V_\gamma$ is contained in some clopen set $U\in \UU$. Hence $p\notin U\supseteq \overline{V_\gamma}$. This and the inductive assumption imply that $\overline{V_\gamma}\cap X$ is $\sigma$-compact for every $\gamma<\kappa$. Hence, the set
$$G_\gamma=V_\gamma\cap \left(K\setminus \left(\overline{V_\gamma}\cap X\right)\right)=V_\gamma\setminus X$$
is a $G_\delta$-subset of $K$ for all $\gamma<\kappa$.

Now, since $\VV=\{V_\gamma:\gamma<\kappa\}$ is point-finite in $X$ and $G_\gamma\subseteq V_\gamma$ for every $\gamma<\kappa$, we can use Lemma \ref{lemma Tkachuk} to find a $G_\delta$-subset $G$ of $K$ such that
$$\bigcup_{\gamma<\kappa}G_\gamma\subseteq G\subseteq  K\setminus X.$$
But $\bigcup_{\gamma<\kappa}G_\gamma=\left(\bigcup\VV\right)\setminus X=\left(K\setminus \{p\}\right)\setminus X=K\setminus X$, so $G=K\setminus X$. We conclude that $X$ is $\sigma$-compact having a $G_\delta$ complement in a compact space.
\end{proof}

\bibliographystyle{siam}
\bibliography{bib.bib}

\end{document}